\documentclass[10pt, a4paper]{article}

\usepackage{a4wide}

\usepackage[utf8]{inputenc}
\usepackage[english]{babel}
\usepackage{amsthm}
\usepackage{amsmath}
\usepackage{amssymb}
\usepackage{url}
\usepackage[onehalfspacing]{setspace}

\usepackage{verbatim}
\usepackage{alltt}
\usepackage{fancyvrb}
\usepackage{float}
\usepackage{etoolbox}
\usepackage{enumerate}
\theoremstyle{plain}
\newtheorem{thm}{Theorem}

\newenvironment{customthm}[1]
  {\innercustomthm}
  {\endinnercustomthm}

\theoremstyle{definition}

\theoremstyle{remark}
\newtheorem{remark}[thm]{Remark}

\newtheorem{example}{Example}[section]
\newcommand{\F}{\mathbb{F}}
\newcommand{\E}{\mathbf{E}}

\begin{document}
\date{}
\title{The measures of pseudorandomness and the NIST tests}
\author{L\'aszl\'o M\'erai \\	
	 \multicolumn{1}{p{.7\textwidth}}{\small \centering\emph{\small Johann Radon Institute for Computational and Applied Mathematics, Austrian Academy of Sciences, Altenbergerstr.\ 69, 4040 Linz, Austria}\\ 
	e-mail: \texttt{merai@cs.elte.hu}}\\ \\
	Jo\"el Rivat \\
	\multicolumn{1}{p{.7\textwidth}}{\small \centering\emph{\small Institut de Math\'ematiques de Marseille UMR 7373, Universit\'e d'Aix-Marseille, 163 avenue de Luminy, 13288 Marseille Cedex 9, France} \\ 
	e-mail: \texttt{joel.rivat@univ-amu.fr}}\\ \\
	and\\
	Andr\'as S\'ark\"ozy \\
	\multicolumn{1}{p{.7\textwidth}}{\small \centering\emph{\small E\"otv\"os Lor\'and University, Department of Algebra and Number Theory,  P\'azm\'any P\'eter s\'et\'any 1/c, H-1117 Budapest, Hungary}\\  e-mail: \texttt{sarkozy@cs.elte.hu}}
	}

\maketitle

\begin{abstract}
  A few years ago new quantitative measures of pseudorandomness of
  binary sequences have been introduced. Since that these measures
  have been studied in many papers and many constructions have been
  given along these lines. In this paper the connection between the
  new measures and the NIST tests is analyzed. It is shown that finite 
  binary sequences possessing strong pseudorandom properties in terms 
  of these new measures usually also pass or nearly pass most of the 
  NIST tests. 

 {
 \let\thefootnote\relax\footnote{The final publication is available at Springer via \url{https://doi.org/10.1007/978-3-319-76620-1_12} \\
 Research partially supported by the Hungarian National Foundation for Scientific Research, grants K119528 and NK104183, by the Austrian Science Fund FWF Project F5511-N26 which is part of the Special Research Program "Quasi-Monte Carlo Methods: Theory and Applications" and by the ANR Project MuDeRa.}
 }

\end{abstract}

\textit{2010 Mathematics Subject Classification: 11K45} 

\textit{Key words and phrases:} pseudorandom sequences, binary sequence, NIST tests

\section{Introduction}

The National Institute of Standards and Technology (=NIST) of the US issued 
the document \cite{nist} which we refer to as ``NIST tests''. We quote the 
introduction of this documentum: ``The need for random and pseudorandom
numbers arises in many cryptographic applications. For example, common 
cryptosystem...'' [e.g., the Vernam cipher] ``...employs keys that must be
generated in a random fashion... This document discusses the randomness testing 
of random numbers and pseudorandom number generators that may be used for many
purposes including cryptographic, modeling and simulation applications. The
focus of this document is on those applications where randomness is required
for cryptographic purposes. A set of statistical tests for randomness is
described in this document.'' The NIST tests
is a package consisting of 15 tests, and in each of these 15 cases one has to
compute the value of a certain statistics composed from the elements of the
given sequence. Then we have to check whether this value is close enough to
the expected value of this statistics for a random binary sequence.
If, say, we want to check the quality of a PRBG 
(=pseudorandom bit generator; an algorithm generating a long bit sequence from
a short random one called ``seed''), then this can be done by testing several
bit sequences generated from random seeds by the PRBG; if these sequences 
pass the NIST tests then the PRBG is suitable for further consideration.
As the NIST tests writes: 
``These tests may be useful as a first step in determining whether or not a
generator is suitable for a particular cryptographic application. However, 
no set of statistical tests can absolutely certify a generator as appropriate
for usage in a particular application, i.e., statistical testing cannot serve 
as a substitute for cryptanalysis.'' The weak point of this 
``first step'' by using the NIST tests is that they are of a posteriori type, 
i.e., we do not have any a priori control of the pseudorandom quality 
of the output sequences of the PRBG so that we do not know 
anything about the output sequences not tested by the NIST tests. 

Thus one might like to replace this a posteriori type testing based on the
NIST tests with a method for a priori testing 
(called ``theoretical testing'' by Knuth) of all the output sequences of 
the PRBG (which seems to be a too optimistic goal) or at least to combine 
and complete the NIST tests by a method of this type
(this is a more realistic goal). In 1997 Mauduit and the third author
\cite{MS1997} made a significant step in this direction: they
introduced certain measures of pseudorandomness, and they presented an
example for binary sequence which possess strong pseudorandom
properties in terms of these measures. Since that more than 150 papers
have been written in which these measures are studied, further
measures are introduced, or further ``good'' constructions are
presented; an excellent survey of these papers is given by Gyarmati
\cite{Gy2013}. It is a natural question to ask: how is this direction
related to the NIST tests? Can one, indeed, complete
the a posteriori testing by using these new results? A partial answer
was given by the second and third author in \cite{RS2006}: they
studied the connection of 3 NIST tests and a further often used test
with the measures of pseudorandomness
introduced in \cite{MS1997}, and they showed that the values of the
statistics to be computed in each of these tests can be estimated well
by using the measures of pseudorandomness mentioned above, moreover,
they also presented numerical calculations to show that a ``random''
sequence selected from a family of binary sequences constructed by
using the Legendre symbol \cite{GMS2004,MS1997} passes all the NIST
tests (of 2005). In this paper our goal is to continue that work in
the following direction: we will study the connection between 3 further NIST
tests and our measures of pseudorandomness. (The 8 remaining NIST
tests are too complicated to study their theoretical connection with
the measures of pseudorandomness.) Moreover, we will present further
numerical calculations to show that a ``random'' sequence selected
from two other families constructed by using other principles also
passes or ``almost passes'' all the NIST tests.

\section{The measures of pseudorandomness}\label{sec:measures}

First we recall a few definitions and facts from \cite{MS1997} and other related papers that we will need in this paper.

Consider a finite binary sequence
\begin{equation}\label{eq:E}
 E_N=(e_1,\dots,e_N)\in\{-1,+1\}^N.
\end{equation}
(Note that in the analysis in some of the NIST  tests the bit sequences are also transformed into sequences consisting of -1 and +1.) Then the \emph{well-distribution measure} of $E_N$ is defined as
\begin{equation}\label{eq:W}
W(E_N)=
\max _{a,b,t} \left| \sum_{j=0}^{t-1} e_{a+jb} \right|,
\end{equation}
where the maximum is taken over all $a,b,t \in \mathbb{N}$ such that $1 \leq a \leq a+(t- 1)b\leq N$, while the \textit{correlation measure of order $k$} of $E_N$ is defined as
\begin{equation}\label{eq:C}
C_{k}(E_N)=\max_{M,D}\left| \sum_{n=1}^M e_{n+d_1}e_{n+d_2}\dots e_{n+d_{k}} \right|,
\end{equation}
where the maximum is taken over all $D=(d_1\dots, d_k)$ and $M$ such that $0\leq d_1<\dots< d_{k} \leq N-M$. Then the sequence is considered as a ``good'' pseudorandom sequence if both these measures $W(E_N)$ and $C_k(E_N)$ (at least for ``small'' $k$) are ``small'' in terms of $N$ (in particular, both are $o(N)$ as $N\rightarrow \infty$). Indeed, it is shown in \cite{CMS2002} that for a ``truly random'' $E_N\in\{-1,+1\}^N$ both $W(E_N)$ and, for fixed $k$, $C_k(E_N)$ are of order of magnitude $N^{1/2}$ with probability ``near 1'' (see also \cite{AKMMR2007} and   \cite{KMMR2003}). Thus for ``really good'' pseudorandom sequences we expect the measures \eqref{eq:W} and \eqref{eq:C} to be not much greater than $N^{1/2}$.
In \cite{MS1997} a combination of the well-distribution and correlation measures was also introduced: the \emph{combined pseudorandom measure of order $k$} of the sequence $E_N$ in \eqref{eq:E} is defined as
\[
 Q_k(E_N)=\max_{a,b,t,D}\left| \sum_{j=0}^t e_{a+jb+d_1}e_{a+jb+d_2}\dots e_{a+jb+d_{k}} \right|
\]
where the maximum is taken over all $a,b,t\in\mathbb{N}$ and $k$-tuples $D=(d_1,d_2,\dots,d_k)$ of non-negative integers $d_1<d_2<\dots<d_k$ such that all the subscripts $a+jb+d_l$ belong to $\{1,2,\dots,N\}$. (Clearly, we have $W(E_N)=Q_1(E_N)$ and $C_k(E_N)\leq Q_k(E_N)$ for $k\geq 2$.)
We will also need the definition of normality measures also introduced in \cite{MS1997}. The \emph{normality measure of order $k$} of the sequence $E_N$ of form  \eqref{eq:E} is defined as
\begin{multline}\label{eq:N}
 N_k(E_N)=\max_{X\in\{-1,+1\}^k} \max_{0<M\leq N+1-k} \Big||\{n: 0\leq n\leq M, \ (e_{n+1},\dots, e_{n+k})=X  \}|-\frac{M}{2^k} \Big|.
\end{multline}
It was also shown in \cite{MS1997} (see Proposition 1 and its proof there) that for all $N$, $E_N$ and $k<N$ we have
\begin{equation}\label{eq:N-C}
 N_k(E_N)\leq \frac{1}{2^k}\sum_{t=1}^k \binom{k}{t} C_t(E_N)\leq \max_{1\leq t\leq k } C_t(E_N).
\end{equation}
Thus if $C_t(E_N)$ is small for all $t\leq k$, then $N_k(E_N)$ is also small.

\section{Three principles for constructing large families of binary sequences
  with strong pseudorandom properties}\label{sec:construction}

It is well known that the Legendre polynomial has many pseudorandom 
properties \cite{Cu,Da}. It was shown in \cite{MS1997} that the 
Legendre symbol also possesses strong pseudorandom properties in terms of 
the pseudorandom measures described in Section 2: 
if $p$ is an odd prime, we write $N=p-1$ and
\[
 E_N=(e_1,\dots, e_N) \quad \text{with } e_n=\left(\frac{n}{p} \right) \ \text{for } n=1,\dots, N,
\]
then we have
\[
 W(E_N)\ll N^{1/2} \log N \ \ \ \textup{ and } \ \ \ 
C_k(E_N)\ll kN^{1/2} \log N
\]
for all $k<N$ (where $\ll$ is Vinogradov's notation: $f(x)\ll g(x)$ means that $f(x)=O(g(x))$; in both cases the implicit constants can be
computed explicitly (and are relatively small constants). 

Goubin, Mauduit and S\'ark\"ozy \cite{GMS2004} studied the generalization of this construction with $f(n)$ in place of $n$ (where $f(x)\in\F_p[x]$). Their results can be combined in the following way:
\begin{customthm}{A}\label{thm:GMS2004}
If $p$ is a prime number, $f(x)\in\F_p[x]$ (\,$\F_p$ being the field of the modulo $p$ residue classes) has degree $k (>0)$, $f(x)$ has no multiple zero in $\overline{\F_p}$ (= the algebraic closure of $\F_p$), and the binary sequence $E_p=(e_1,\dots, e_p)$ is defined by
\begin{equation}\label{constr:legendre}
e_n=
\left\{
\begin{array}{cl}
 \left(\frac{f(n)}{p} \right) & \text{for } (f(n),p)=1\\
 +1 & \text{for } p\mid f(n),
\end{array}
\right.
\end{equation}
then we have
\[
 W(E_p)<10kp^{1/2}\log p.
\]
Moreover, assume that also $\ell\in\mathbb{N}$, and one of the following assumptions holds:
\begin{enumerate}[(i)]
 \item $\ell=2$; \label{ass:GMS-1}
 \item $\ell <p$, and 2 is a primitive root modulo $p$;\label{ass:GMS-2}
 \item $(4k)^\ell <p$.\label{ass:GMS-3}
\end{enumerate}
Then we also have
\[
 C_\ell (E_p)< 10 k \ell p^{1/2}\log p.
\]
\end{customthm}

The second principle is to utilize the fact that the multiplicative inverse
modulo $p$ is distributed in a random way in $(0,p)$. Denote the least
non-negative residue of $n$ modulo $p$ by $r_p(n)$, and if the prime $p$ is
fixed, then denote the multiplicative inverse of $a$ modulo $p$ by $a^{-1}$
(so that $a \cdot a^{-1}\equiv 1 \mod p$). The following theorem  
(here we present the result in a slightly simplified form) was
proved in \cite{MS2005}.
\begin{customthm}{B}\label{thm:inverse-W}
Assume that $p$ is an odd prime number, $f(x)\in\F_p[x]$ 
has degree $k$ with $0<k<p$
and no multiple zero in $\overline{\F_p}$. Define the binary sequence $E_p=(e_1,\dots, e_p)$ by
\begin{equation}\label{constr:inverse}
e_n=
\left\{
\begin{array}{cl}
 +1 & \text{if } (f(n),p)=1, \ r_p(f(n)^{-1})<\frac{p}{2}\\
 -1 & \text{if either } (f(n),p)=1, \ r_p(f(n)^{-1})>\frac{p}{2} \text{ or } p\mid f(n).
\end{array}
\right.
 \end{equation}
Then we have
\[
 W(E_p)\ll k p^{1/2}(\log p)^2.
\]
Moreover, if $\ell\in\mathbb N$, $2\le \ell\le \frac{p}{2^k}$ and 
$f(x)\in\mathbb F_p[x]$ is of the form $f(x)=(x+a_1)(x+a_2)\cdots (x+a_k)$ with
$a_1,\dots,a_k\in\mathbb{F}_p$ ($a_i\ne a_j$ for $i\ne j$)
then we also have
\begin{equation}\label{eq:inverse-C}
 C_\ell (E_p)\ll k \ell p^{1/2}(\log p)^{\ell +1}.
\end{equation}
\end{customthm}

For further related results see also \cite{CL2008,L2013,LG2014}. For example Liu \cite{L2013} gave another (and simpler) condition to control the correlation measure.

\begin{customthm}{C}\label{thm:inverse-Liu}
Assume that $p$ is an odd prime number, $f(x)\in\F_p[x]$ is a polynomial of degree $(0<)k(<p)$ such that 0 is its unique zero in $\F_p$. If the sequence $E_N$ is defined as in Theorem \ref{thm:inverse-W} and $\ell<p$, then \eqref{eq:inverse-C} also holds.
\end{customthm}

The third construction is based on elliptic curves. Let $p>3$ be a prime number and
let $\E$ be an elliptic curve over $\F_p$ defined by the Weierstrass equation
\[
y^2=x^3+Ax+B
\]
with coefficients $A, B\in \F_p$ and non-zero discriminant 
(see \cite{washington}).
The $\F_p$-rational points $\E(\F_p)$ of $\E$ form an Abelian group with the point in infinity $\mathcal{O}$ as the neutral element, where the group operation is denoted by $\oplus$. For a rational point $R\in \E(\F_p)$, a multiple of $R$ is defined by $nR=\bigoplus_{i=1}^nR$. Let $\F_p(\E)$ be the function field of $\E$ over $\F_p$ and as usual for $f\in\F_p(\E)$ let 
let $\deg f$ denote of the degree of $f$ in $\mathbb{F}_p(\mathbf{E})$, 
see \cite{washington}. For example, for the coordinate functions 
we have $\deg x=2$ and $\deg y=3$.

Let $G\in\E(\F_p)$ be of order $T$ and $f\in\F_p(\E)$. Define the binary sequence $E_T=(e_1,\dots, e_T)$ by
\begin{equation}\label{constr:ec}
e_n=
\left\{
\begin{array}{cl}
 \left(\frac{f(nG)}{p} \right) & \text{if } (f(nG),p)=1, \\
 +1 & \text{otherwise.} 
\end{array}
\right.
 \end{equation}

The first author studied the pseudorandomness of this sequence \cite{M2009}. His results can be combined in the following way:
\begin{customthm}{E}\label{thm:ec}
 Let $G\in\E(\F_p)$ be a generator of $\E(\F_p)$ of prime order $T$. Let $f\in\F_p(\E)$ which is not a perfect square in $\overline{\F_p}(\E)$ with degree $k=\deg f$. Then
 \[
  W(E_T)\leq 6 k p^{1/2}\log T.
 \]
Moreover, assume that also $\ell\in\mathbb{N}$, and one of the conditions \textit{(i),(ii),(iii)} of Theorem \ref{thm:GMS2004} holds with $p$ replaced by $T$. Then 
\[
 C_\ell (E_T)< 2 \ell k p^{1/2}\log T.
\]
\end{customthm}

\section{The ``frequency test within a block''}\label{sec:freqBlock}

First we will study the connection of this test (which appears as Section 2.2 in \cite{nist}) with the measures of pseudorandomness described in Section \ref{sec:measures}. We quote \cite{nist}: ``The focus of this test is to determine whether the frequency of ones in an $M$-bit block is approximately $M/2$, as would be expected under an assumption of randomness. For block size $M=1$, this test degenerates to test 1, the Frequency (Monobit) test'' (which was analyzed in \cite{RS2006}).

Let $E_N=(e_1,\dots, e_N)\in\{-1,+1\}^N$ be the sequence to be tested, $M$ the length of each block, and, as \cite{nist} writes, 
\begin{equation*}
  \parbox{\dimexpr\linewidth-4em}{
    \strut
    ``Partition the input sequence into $t=[\frac{N}{M}]$ non-overlapping blocks.'' 
    \strut
  }
\end{equation*}
(Here and later we adjust the notations of \cite{nist} to our notation.) The quotation continues:
\begin{equation*}
  \parbox{\dimexpr\linewidth-4em}{
    \strut
    ``Discard any unused bits. Determine the proportion $\pi_i$ of ones in each block of length $M$ for $1\leq i\leq t$'' 
    \strut
  }
\end{equation*}
Now ``Compute the $\chi^2$ statistic 
\begin{equation}\label{qu:block-3}
X_1=4M\sum_{i=1}^t\left(\pi_i-\frac{1}{2}\right)^2.'' 
\end{equation}
Then the sequence $E_N$ passes this test if the value of this statistic is small enough in the sense described in \cite{nist}; we skip the technical details.

In 2.2.7 \cite{nist} writes: ``The block size $M$ should be selected such that
\begin{equation}\label{qu:block-4}
 M\geq 20, \ M>N/100 \ \text{and } t<100."
\end{equation}


\begin{thm}
Using the notation above and assuming \eqref{qu:block-4}, for every $E_N\subset\{-1,+1\}^N$ we have
 \begin{equation}\label{eq:block-5}
  X_1\leq 2 \cdot 10^4 \frac{W(E_N)^2}{N}.
 \end{equation}
\end{thm}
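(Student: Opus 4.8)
The plan is to express the statistic $X_1$ directly in terms of the partial sums of $E_N$ over the individual blocks, and then to recognize each such partial sum as an admissible well-distribution sum in \eqref{eq:W}. First I would fix the block structure: for $1\le i\le t$ the $i$-th block consists of the entries $e_{(i-1)M+1},\dots,e_{iM}$, and I set $S_i=\sum_{j=1}^{M}e_{(i-1)M+j}$. If the block contains $u_i$ entries equal to $+1$ (the ``ones''), then $S_i=u_i-(M-u_i)=2u_i-M$ and $\pi_i=u_i/M$, so that $\pi_i-\tfrac12=S_i/(2M)$. Substituting this into \eqref{qu:block-3} gives the clean identity $X_1=4M\sum_{i=1}^t\bigl(S_i/(2M)\bigr)^2=\frac{1}{M}\sum_{i=1}^t S_i^2$, which reduces the whole problem to bounding the block sums $S_i$.

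The key step is then to observe that each $S_i$ is a special case of the sum defining $W(E_N)$ in \eqref{eq:W}: choosing there the first index $a=(i-1)M+1$, common difference $b=1$, and $M$ summands, one recovers exactly $S_i$, and this choice is admissible since the largest index $iM$ satisfies $iM\le tM\le N$. Hence $|S_i|\le W(E_N)$ for every $i$, and the identity above yields $X_1\le\frac{t}{M}\,W(E_N)^2$.

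It remains to control the factor $t/M$ by means of the NIST constraints \eqref{qu:block-4}. Writing $t/M=(tN/M)\,N^{-1}$, I would bound $tN/M=t\cdot(N/M)$ factor by factor: the hypothesis $t<100$ bounds the first factor, while $M>N/100$ (equivalently $N/M<100$) bounds the second, so $tN/M<100\cdot100=10^4$. Substituting gives $X_1<10^4\,W(E_N)^2/N$, which is in fact slightly stronger than the claimed \eqref{eq:block-5}, the stated constant $2\cdot10^4$ being a comfortable margin.

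I do not anticipate a genuine obstacle here: the only mildly delicate points are verifying that the chosen triple $(a,b,M)$ really is admissible for every block (so that $|S_i|\le W(E_N)$ is legitimate), and keeping the $\pm1$-versus-bit bookkeeping consistent when converting $\pi_i-\tfrac12$ into the block sum $S_i$. Once these are settled, the result follows by a direct substitution followed by the two elementary numerical inequalities extracted from \eqref{qu:block-4}.
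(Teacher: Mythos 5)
Your proposal is correct and follows essentially the same route as the paper: rewrite $\pi_i-\tfrac12$ as $\tfrac{1}{2M}$ times the block sum, bound each block sum by $W(E_N)$ (as an admissible arithmetic-progression sum with difference $1$), and then use $t<100$ and $N/M<100$ from \eqref{qu:block-4}. Your bookkeeping even yields the slightly sharper constant $10^4$ in place of $2\cdot 10^4$, which of course still implies \eqref{eq:block-5}.
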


\begin{proof}
Clearly we have
\begin{align*}
  \pi_i
  &=
  \frac{|\{e_j:\ (i-1)M<j\leq iM, e_j=+1\}|}{M}
  \\
  &=
  \frac{1}{M}\sum_{j=(i-1)M+1}^{iM} \frac{1}{2}(e_j+1)
  =\frac{1}{2M}\sum_{j=(i-1)M+1}^{iM} e_j+\frac{1}{2}
\end{align*}
whence, using the notation of Section \ref{sec:measures},
\[
 \left|\pi_i-\frac{1}{2} \right|=\frac{1}{2M}\left|\sum_{j=(i-1)M+1}^{iM} e_j \right|\leq \frac{1}{2M} W(E_N)
\]
for every $1\leq i \leq t$. Thus it follows from \eqref{qu:block-3} that
\begin{equation*}
X_1\leq 4M\cdot t\left( \frac{1}{2M} W(E_N) \right)^2 =\frac{2t}{M}W(E_N)^2.
\end{equation*}
By using \eqref{qu:block-4}, \eqref{eq:block-5} follows from this.
\end{proof}
%

In each of the constructions described in Section \ref{sec:construction} the upper bound in inequality \eqref{eq:block-5}  is less than a constant multiple of a fixed power of $\log N$, so that this upper bound falls just a little short of the desired $<c$ (with a small positive constant $c$). In many applications this can be interpreted as a strong tendency towards pseudorandomness which is sufficient for our purposes, while if we have to stick to the threshold bound belonging to the test, then this good upper bound points to the direction that choosing successive random sequences from our family studied we have a good chance to find soon a sequence which also satisfies the stronger inequality prescribed in the test.



\section{The ``test for the longest run of ones in a block''}\label{sec:longest_run}

This test appears in Section 2.4 of \cite{nist}. We quote \cite{nist}: ``The focus of the test is the longest run of ones within $M$ bit blocks. The purpose of this test is to determine whether the length of the longest run of ones within the tested sequence is consistent with the length of the largest run of ones that would be expected in a random sequence.''. The test to answer this question is carried out in \cite{nist} in the following way:

Assume the $N,M,t$ are positive integers with
\begin{equation}\label{eq:NMT}
 N=Mt,
\end{equation}
$N$ is the length of the sequence $E_N=(e_1,\dots, e_N)\in\{-1,+1\}^N$ to be tested (again  we switch from bit sequences to $\pm1$ sequences), $M$ is taken from a certain special sequence $8,128,10^4,\dots$ (see \cite{nist}), $E_N$ is split in $t$ blocks of length $M$, $t$ and thus also $N$ is large enough in terms of $M$ (in particular, for $M=8,128,10^4$ the number $N$ must be at least $128,272, 750000$, respectively) and $K$ ($=3,5,6,\dots$) is certain positive integer assigned to the given $M$ value. The set $\{0,1,\dots, M\}$ is split in $K+1$ disjoint parts $\mathcal{P}_0,\mathcal{P}_1,\dots, \mathcal{P}_K$ so that
\begin{equation}\label{eq:partition}
 \{0,1,\dots, M\}=\mathcal{P}_0 \cup \mathcal{P}_1 \cup \dots \cup \mathcal{P}_K, \quad \mathcal{P}_i\cap \mathcal{P}_j = \emptyset \quad \text{for } 0\leq i<j\leq K,
\end{equation}
e.g., for $M=10^4$, $K=6$ in \cite{nist} we have
\begin{align*}
  \{0,1,\dots,10^4\}
  =\{0,1,\dots, 10\}\cup\{11\}\cup\{12\}\cup\{13\}\cup\{14\}\cup\{15\}\cup\{16,17,\dots,10^4\}.  
\end{align*}
Then for $i=0,1,\dots, K$ we count how many of the $t$ blocks is such that the length of the longest run of $+1$'s in it belongs to the part $\mathcal{P}_i$ of $\{0,1,\dots,M\}$; let $\nu_i$ denote the number of blocks with this property. Let $\pi_i$ be the probability of the event that the length of the longest run of $+1$'s in a random sequence of $+1$ and $-1$ with length $M$ is $i$. The test statistic to be computed is a weighted square mean of the deviations of the $\nu_i$'s from their expected values $t\pi_i$:
\begin{equation}\label{eq:tat_X2}
 X_2=\sum_{i=0}^K \frac{(\nu_i-t\pi_i)^2}{t\pi_i},
\end{equation}
``which, under the randomness hypothesis, has an approximate \emph{$\chi^2$-distribution with $K$ degrees of freedom}''. Here the theoretical values $\pi_i$ can be replaced by approximating numerical values which for certain pairs $M,K$ are provided in Section 3.4 of \cite{nist}. (We remark that for fixed $M$, the choice of $K$ and the computation of the values approximating $\pi_i$ is based on the analysis of distribution of the longest run in random walks; see, e.g. \cite[Chapter~7]{Revesz}.)

We will show that the statistic $X_2$ in \eqref{eq:tat_X2} can be estimated in the following way:

\begin{thm}\label{thm:longest_run}
We have
\[
 X_2\leq \frac{M}{N} \left(\sum_{r=1}^M \binom{M}{r} Q_r(E_N)\right)^2.
\]
\end{thm}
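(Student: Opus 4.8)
The plan is to treat the longest-run length within a block as a $\{0,1\}$-valued function of the $M$ bits of that block, expand it in the Walsh (multilinear) basis, and recognize each non-constant Walsh term as a sum over blocks that is exactly of the shape controlled by the combined measure $Q_r$.

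First I would fix $i$ and let $g_i\colon\{-1,+1\}^M\to\{0,1\}$ be the indicator of the event that the longest run of $+1$'s in a length-$M$ string lies in $\mathcal{P}_i$. Writing the $b$-th block as $(e_{(b-1)M+1},\dots,e_{bM})$, we have $\nu_i=\sum_{b=1}^t g_i(e_{(b-1)M+1},\dots,e_{bM})$. Every function on $\{-1,+1\}^M$ has a unique expansion $g_i(x_1,\dots,x_M)=\sum_{S\subseteq\{1,\dots,M\}}\widehat{g_i}(S)\prod_{l\in S}x_l$, whose constant term is $\widehat{g_i}(\emptyset)=2^{-M}\sum_X g_i(X)=\pi_i$. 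Substituting and summing over the $t$ blocks gives
\[
\nu_i-t\pi_i=\sum_{\emptyset\ne S\subseteq\{1,\dots,M\}}\widehat{g_i}(S)\,T_S,\qquad T_S:=\sum_{b=1}^{t}\prod_{l\in S}e_{(b-1)M+l}.
\]

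The second step is to observe that for $S=\{l_1<\dots<l_r\}$ the sum $T_S=\sum_{j=0}^{t-1}e_{jM+l_1}\cdots e_{jM+l_r}$ is precisely a combined-measure sum: taking $a=l_1$, common difference $M$, and offsets $d_m=l_m-l_1$ (so $d_1=0$), all subscripts lie in $\{1,\dots,N\}$ because $N=Mt$, whence $|T_S|\le Q_{|S|}(E_N)$. Combined with the crude Fourier bound $|\widehat{g_i}(S)|=\big|2^{-M}\sum_X g_i(X)\prod_{l\in S}x_l\big|\le 2^{-M}\sum_X g_i(X)=\pi_i$ (valid since $g_i\ge 0$ and $|\prod_{l\in S}x_l|=1$), and grouping by $r=|S|$, I get
\[
|\nu_i-t\pi_i|\le\sum_{r=1}^{M}\Big(\sum_{|S|=r}|\widehat{g_i}(S)|\Big)Q_r(E_N)\le\pi_i\sum_{r=1}^{M}\binom{M}{r}Q_r(E_N).
\]

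Finally I would insert this into the statistic. Writing $B=\sum_{r=1}^M\binom{M}{r}Q_r(E_N)$, the bound above gives $(\nu_i-t\pi_i)^2/\pi_i\le\pi_i B^2$, so that, using $\sum_{i=0}^K\pi_i=1$ (the parts $\mathcal{P}_0,\dots,\mathcal{P}_K$ partition $\{0,\dots,M\}$),
\[
X_2=\frac{1}{t}\sum_{i=0}^{K}\frac{(\nu_i-t\pi_i)^2}{\pi_i}\le\frac{B^2}{t}=\frac{M}{N}\Big(\sum_{r=1}^M\binom{M}{r}Q_r(E_N)\Big)^2,
\]
which is the desired inequality. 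The place that needs care — and what I would flag as the main obstacle — is the passage from the combinatorial quantity $\nu_i$ to a controllable analytic expression: the decisive move is realizing that the Walsh coefficients satisfy the cheap bound $|\widehat{g_i}(S)|\le\pi_i$ rather than the naive $\le 1$, since it is exactly the extra factor $\pi_i$ that, after division by $\pi_i$ in $X_2$, collapses the sum over $i$ via $\sum_i\pi_i=1$. A secondary point to check is that the block sums $T_S$ genuinely fall under the definition of $Q_r$ (arithmetic progression of step $M$, admissible offset pattern $d_1<\dots<d_r$ with $d_1=0$), together with the harmless off-by-one coming from choosing the free upper index in the $Q_r$-maximization equal to $t-1$, so that $T_S$ has exactly $t$ terms.
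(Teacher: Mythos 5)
Your proposal is correct and is essentially the paper's own proof in Fourier-analytic dress: the paper expands the indicator of the set $\mathcal{G}_i$ of admissible block patterns as $\sum_{\ell=1}^{\gamma_i}\prod_{x=1}^{M}\frac{1+e_{(j-1)M+x}g_x^{(\ell)}}{2}$ and bounds the coefficient of each product $e_{(j-1)M+x_1}\cdots e_{(j-1)M+x_r}$ by $\gamma_i/2^M=\pi_i$, which is precisely your Walsh-coefficient bound $|\widehat{g_i}(S)|\le\pi_i$. The remaining steps you flag (identifying the block sums as $Q_r$-sums with step $M$ and offsets $l_m-l_1$, and collapsing the sum over $i$ via $\sum_i\pi_i=1$) coincide with the paper's argument.
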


Note that this estimate gives a good bound for $X_2$ only if $N$ is large in terms of $M$; the first table in \cite{nist}, pp. 2-8
seems to indicate that this can be assumed.

\begin{proof}
 We will use the following notations:
 
 For $Z\in\mathbb{N}$, let $\Phi_Z$ be the set of the binary sequences
 \[
  F_Z=(f_1,f_2,\dots, f_Z)\in\{-1,+1\}^Z,
 \]
for such a sequence $F_Z$ let $\psi(\F_Z)$ denote the length of the longest run of $+1$'s in $F_Z$, and for $j\in\mathbb{N}$, $jM\leq Z$, write
$F_Z^{(j,M)}=(f_{(j-1)M+1},f_{(j-1)M+2},\dots, f_{jM})$.

Then for $i=0,1,\dots, K$, by the definition of $\nu_i$ and \eqref{eq:NMT} we have
\begin{equation}\label{eq:nu}
 \nu_i=\sum_{\substack{1\leq j\leq t \\ \psi(E_N^{(j,M)})\in\mathcal{P}_i}}1,
\end{equation}
and the expectation of $\nu_i$ choosing any $F_N\in\Phi_N$ with equal probability $1/2^N$ is
\begin{equation}\label{eq:expectation}
 \mathbb{E} (\nu_i)= \mathbb{E}\left(\sum_{\substack{1\leq j\leq t \\ \psi(E_N^{(j,M)})\in\mathcal{P}_i}}1\right)=
 t \cdot \mathbb{E}\left(\sum_{\substack{G\in\Phi_M \\ \psi(G)\in\mathcal{P}_i}}1\right)=t\pi_i.
\end{equation}
Let $G^{(1)},G^{(2)},\dots, G^{(\gamma_i)}$ be the sets $G$ counted in the last sum, and write $\mathcal{G}_i=\{G^{(1)},G^{(2)},\dots, G^{(\gamma_i)}\}$ and $G^{(j)}=(g_1^{(j)},g_2^{(j)},\dots, g_M^{(j)})$. Each of these sets $G^{(j)}$ contributes by 1 to this sum, and they are to be selected with probability $1/2^M$ uniformly. Thus it follows from  \eqref{eq:expectation} that
\begin{equation}\label{eq:exp}
 \mathbb{E}(\nu_i)=t \pi_i=t \frac{|\mathcal{G}_i|}{2^M}=\frac{t}{2^M}\gamma_i.
\end{equation}
Moreover, it follows from \eqref{eq:partition} that each of the $2^M$ sets $G\in\Phi_M$ is counted in exactly one $\mathcal{G}_i$ with weight 1, thus we have
\begin{equation}\label{eq:G}
 \sum_{i=0}^K|\mathcal{G}_i|=\sum_{i=0}^K\gamma_i=2^M.
\end{equation}
Now we will estimate $\nu_i$ for $0\leq i\leq K$. Consider a subset $G^{(\ell)}=(g_1^{(\ell)},g_2^{(\ell)},\dots,g_M^{(\ell)})\in\mathcal{G}_i$. Then for $j=1,2,\dots,t$ clearly we have
\[
 \prod_{x=1}^M\frac{1+e_{(j-1)M+x}g_x^{(\ell)}}{2}=\left\{
 \begin{array}{cl}
  1 & \text{if } E_{N}^{(j,M)}=G^{(\ell)},\\
  0 & \text{if } E_{N}^{(j,M)} \neq G^{(\ell)},
 \end{array}
 \right.
\]
whence
\[
 \sum_{\ell=1}^{\gamma_i}\prod_{x=1}^M\frac{1+e_{(j-1)M+x}g_x^{(\ell)}}{2}=\left\{
 \begin{array}{cl}
  1 & \text{if } E_{N}^{(j,M)} \in \{G^{(1)},G^{(2)},\dots, G^{(\gamma_i)}\}=\mathcal{G}_i,\\
  0 & \text{if } E_{N}^{(j,M)} \not \in \mathcal{G}_i,
 \end{array}
 \right.
\]
so that by \eqref{eq:nu} we have
\begin{align}\label{eq:expansion}
  \nu_i&=\sum_{\substack{1\leq j\leq t \\ \psi(F_N^{(j,M)})\in\mathcal{P}_i}}1=\sum_{1\leq j\leq t}\sum_{E_N^{(j,M)}\in\mathcal{G}_i}1=  \sum_{j=1}^t\sum_{\ell=1}^{\gamma_i}\prod_{x=1}^M\frac{1+e_{(j-1)M+x}g_x^{(\ell)}}{2}\notag \\
  &= \sum_{\ell=1}^{\gamma_i} \sum_{j=1}^t \left(\frac{1}{2^M}+\frac{1}{2^M}\sum_{r=1}^M \sum_{1\leq x_1<\dots< x_r\leq M} g_{x_1}^{(\ell)}\dots g_{x_r}^{(\ell)} e_{(j-1)M+x_1}\dots e_{(j-1)M+x_r} \right)\notag \\
  &= \frac{t}{2^M}\gamma_i + \frac{1}{2^M}\sum_{\ell=1}^{\gamma_i} \left(\sum_{r=1}^M \sum_{1\leq x_1<\dots< x_r\leq M} g_{x_1}^{(\ell)}\dots g_{x_r}^{(\ell)} \sum_{j=1}^t e_{(j-1)M+x_1}\dots e_{(j-1)M+x_r} \right).
\end{align}
It follows from \eqref{eq:exp} and \eqref{eq:expansion} that
\begin{align}\label{eq:deviation}
 |\nu_i-t\pi_i|&= \frac{1}{2^M}\left|\sum_{\ell=1}^{\gamma_i} \left(\sum_{r=1}^M \sum_{1\leq x_1<\dots< x_r\leq M} g_{x_1}^{(\ell)}\dots g_{x_r}^{(\ell)} \sum_{j=1}^t e_{(j-1)M+x_1}\dots e_{(j-1)M+x_r} \right) \right|\notag \\
 &\leq  \frac{1}{2^M}\sum_{\ell=1}^{\gamma_i} \sum_{r=1}^M \sum_{1\leq x_1<\dots< x_r\leq M} \left|g_{x_1}^{(\ell)}\dots g_{x_r}^{(\ell)}\right| \left| \sum_{j=1}^t e_{(j-1)M+x_1}\dots e_{(j-1)M+x_r} \right| \notag\\
  &=\frac{\gamma_i}{2^M}\sum_{r=1}^M \binom{M}{r}Q_r(E_N)=\pi_i \sum_{r=1}^M \binom{M}{r}Q_r(E_N).
\end{align}
By \eqref{eq:NMT}, \eqref{eq:exp}, \eqref{eq:G} and \eqref{eq:deviation} we have
\begin{align*}
 X_2 
 =
 \sum_{i=0}^K \frac{(\nu_i-t\pi_i)^2}{t\pi_i}
 \leq &\sum_{i=0}^K
 \frac{\pi_i}{t} \left(\sum_{r=1}^M \binom{M}{r}Q_r(E_N)\right)^2 \\
 &
 = \frac{1}{t}\left(\sum_{r=1}^M \binom{M}{r}Q_r(E_N)\right)^2 \sum_{i=0}^K \pi_i
 =\frac{M}{N}\left(\sum_{r=1}^M \binom{M}{r}Q_r(E_N)\right)^2 
\end{align*}
which completes the proof of the theorem.
\end{proof}
We remark that in Theorem~\ref{thm:longest_run} the statistic $X_2$ is
estimated in terms of the combined pseudorandom measure $Q_k$, while in the
most important constructions studied in Theorems~\ref{thm:GMS2004},
\ref{thm:inverse-W}, \ref{thm:inverse-Liu} 
and \ref{thm:ec} only the measures $W$ and $C_k$ were estimated, and no
estimates are known for the measures $Q_k$ (and the situation is similar in
most of the other constructions). However, this gap can be bridged easily,
since in most cases the estimate of $Q_k$ can be reduced easily to the
estimate of $C_k$. 
For example, in case of the Legendre symbol 
construction \eqref{constr:legendre} studied in Theorem \ref{thm:GMS2004}, 
we can show that if the 
sequence $E_p$ is defined by \eqref{constr:legendre} in Theorem
\ref{thm:GMS2004} and we assume that all the assumptions in the theorem hold, 
then we have
\[
 Q_k(E_p)\leq C_k(E_p)+2k,
\]
and in case of the two other constructions similar results could be proved. 

\section{The ``linear complexity test''}\label{sec:linCompl}

The \textit{linear complexity} $L(\tilde{E}_N)$ of a \emph{bit} sequence $\tilde{E}_N=(\tilde{e}_1,\dots, \tilde{e}_N)\in\{0,1\}^N$  is defined as the length $L$ of a shortest linear recurrence relation (linear feedback shift register -- LFSR)
\begin{equation*}
 \tilde{e}_{n+L} \equiv c_{L-1}\tilde{e}_{n+L-1}+\dots +c_1\tilde{e}_{n+1}+c_0\tilde{e}_n \pmod 2, \quad 1\leq n\leq N-L
\end{equation*}
where $c_0,\dots, c_{L-1}\in \{0,1\}$, that $\tilde{E}_N$  satisfies,  
with the convention that $L(\tilde{E}_N)=0$ if $\tilde{E}_N=(0,\dots, 0)$, and $L(\tilde{E}_N)=N$ if  $\tilde{E}_N=(0,\dots, 0,1)$. For binary sequence $E_N$ of form \eqref{eq:E} we also define the linear complexity as $L(E_N)=L(\tilde{E}_N)$ with $\tilde{e}_n=(1+e_n)/2$.

The linear complexity is a measure for the unpredictability of a sequence. A large linear complexity is necessary (but not sufficient) for cryptographic applications. The \emph{linear complexity test} appears as Section 2.10 in \cite{nist}. We quote: ``The purpose of this test is to determine whether or not the sequence is complex enough to be considered random. Random sequences are characterized by longer LFSRs. An LFSR that is too short implies non-randomness.''

Brandst\"atter and Winterhof \cite{BW2006} 
showed that a small correlation measure implies large linear complexity:
\begin{equation}\label{eq:BrandstatterWinterhof}
 L(E_N)\geq N- \max_{1\leq k\leq L(E_N)+1}C_k(E_N).
\end{equation}
This result provides a lower bound for the linear complexity of sequences generated by using the Legendre symbol \eqref{constr:legendre} and elliptic curves \eqref{constr:ec}. Namely, if $E_p$ is a sequence generated by \eqref{constr:legendre} using a squarefree polynomial $f(x)\in\F_p[x]$ of degree $k$ and 2 is a primitive root modulo $p$, then Theorem \ref{thm:GMS2004} and \eqref{eq:BrandstatterWinterhof} imply that
\[
 p\leq L(E_p)+ \max_{1\leq k\leq L(E_p)+1}C_k(E_p)\ll k L p^{1/2}\log p,
\]
so that
\begin{equation}\label{eq:linCompl-Legendre}
 L(E_p)\gg \frac{p^{1/2}}{k \log p}.
\end{equation}

Similarly, if the sequence $E_T$ is generated by \eqref{constr:ec} using a squarefree function $f(x,y)\in\F_p[\E]$ with degree $k$ and 2 is a primitive root modulo $T$, then Theorem \ref{thm:ec} and \eqref{eq:BrandstatterWinterhof} imply that
\[
  L(E_T)\gg \frac{p^{1/2}}{k \log T}.
\]
By the celebrated Hasse-Weil Theorem (see e.g. \cite{washington}, Theorem 4.2) 
we have $\big|p+1-|\E(\F_p)|\big|\leq 2p^{1/2}$, thus
\begin{equation}\label{eq:linCompl-ec}
  L(E_T)\gg \frac{T^{1/2}}{k \log T}.
\end{equation}

In practice, the bounds \eqref{eq:linCompl-Legendre} and \eqref{eq:linCompl-ec} are sometimes sufficient. However, the linear complexity of a truly random binary sequence of length $N$ is around $N/2$, thus in more demanding applications one may have to show that the linear complexity of the given sequence is near $N/2$ (or at least it is $\gg N$); the linear complexity of the sequence can be determined by using the well-known Berlekamp-Massey algorithm \cite{BerlekampMassey}.

In even more demanding cases one may need an even more precise study of the complexity properties of the sequence. In \cite{nist} this is done by the ``linear complexity test'' described in \cite[p. 2--24]{nist}. This test requires a more controlled distribution of the linear complexity of the sequences. Namely, it compares the linear complexity within blocks of length $M$ to the expected value of the linear complexity
\[
 \mu _M=\frac{M}{2} + \frac{4+ r_2(M)}{18}
\]
(here again $r_2(M)$ is the non-negative remainder of $M$ modulo 2).

We quote: ``Partition the $N$-bit sequence into $t$ independent blocks of $M$ bits, where $N=t\cdot M$.'' Then ``determine the linear complexity $L_i$ of each of the $t$ blocks ($i=1,\dots, t$)''. ``For each substring, calculate a value of $T_i$ where
\[
T_i=(-1)^M\cdot (L_i-\mu_M)+\frac{2}{9}.''
\]
Define the intervals: 
\[
 \begin{array}{ll}
  I_0=(-\infty, -2,5], & \\
  I_j=(-2.5+j-1,-2.5+j], & j=1,\dots, 5,\\
  I_6=(2.5,\infty), &
 \end{array}
\]
and put $v_j=|\{i: T_i\in I_j, \ i=1,\dots, t\}|$. 

Finally, we define the statistic
\[
 X_3=\sum_{j=0}^6 \frac{(v_j-t\cdot \pi_j)^2}{t\cdot \pi_j},
\]
where $\pi_j$ ($i=0,\dots, 6$) are the probabilities for the classes $I_j$:
\[
 \pi_j=P\left((-1)^M\cdot (L(E_M)-\mu_M)+\frac{2}{9}\in I_j\right),
\]
where $E_M$ is chosen uniformly from $\{-1,+1\}^M$. The acceptance of the sequence depends on the value of the statistic $X_3$: one has to compute the 
``$P$-value'' defined in \cite{nist}, p. 2-25, (7) and if the 
``$P$-value'' is $\ge 0.01$, then the sequence passes the test. 

If the sequence $E_N$ to be tested possesses strong pseudorandom properties in terms of the measures described in Section~\ref{sec:measures}, and the length $M$ of the blocks is much smaller than the length $N$ of the sequence (say, we have $M=o(\log N)$), then one could give a reasonable upper bound for the statistic $X_3$ by the method used in Section~\ref{sec:longest_run} (although here even more work and computation would be needed). However, according to ``input size recommendation'' in \cite[Section~2.10.7]{nist}, $M$ must be very large ($500\leq M\leq 5000$) so that to have $M=o(\log N)$, $N$ must be huge (say, $N>10^{10000}$), thus we will not present the details here. This, of course, does not mean that shorter sequences with good pseudorandom properties fail this test and, indeed, the numerical examples in Section~\ref{sec:numerical} will show that sequences of this type tend to pass this test, but we cannot show that this is necessarily so.
\begin{remark}
As mentioned previously, small linear complexity implies non-randomness. 
However, recent results show
that there are many sequences whose 
linear complexity is very
near 
to its expected value but which also have some
cryptographic weakness: Winterhof and
the first author provided a large class of highly predictable sequences whose
linear complexity is close to its mean \cite{MW2016+}. 
A simple way to eliminate such sequences is to consider also 
the \emph{expansion complexity} of the sequences defined 
in \cite{D2012,MNW2016+}.
\end{remark}

\section{Discrete Fourier Transform (Spectral) Test}\label{sec:spectral}

The ``NIST tests'' writes: ``The purpose of this test
is to detect periodic features (i.e., repetitive patterns that are
near each other) in the tested sequence that would indicate a
deviation from the assumption of randomness''.

This is one of the tests which are too complicated to estimate the
statistic to be studied by using our measures of
pseudorandomness. Instead, we will do the following: we show that
the goal of the test described above can be also achieved by using our
measures of pseudorandomness and, indeed, the combined pseudorandom
measure of order $k$ described above is especially suitable for
this. Take the following example:

\begin{example}
  Consider the $4$-tuple $+1$, $-1$, $-1$, $+1$, and repeat it
  $M = 500000$ times. Then letting $N = 4M = 2000000$, we
  get a binary sequence $E_N = (e_1,e_2,\ldots,e_N)$ with 
  $e_{4k-3} = +1$, $e_{4k-2} = -1$, $e_{4k-1} = -1$, $e_{4k} = +1$
  for $k=1,2,\ldots, M$.
  This sequence is periodic with period $4$. 
  Its combined pseudorandom measure of order $4$ can be estimated in
  the following way:
  \begin{displaymath}
    Q_4(E_N)
    \geq
    \left| \sum_{j=0}^{M-1} e_{4j+1}e_{4j+2}e_{4j+3}e_{4j+4} \right|
    =
    \left| \sum_{j=0}^{M-1} 1 \right|
    =
    M 
    =
    \frac{N}{4},
  \end{displaymath}
  so that this measure is big, is as large as $\frac14$ times the
  length of the sequence, much larger than the optimal 
  $\asymp N^{1/2}$.
  This fact is reflected in the periodicity, thus the sequence is far from
  being of pseudorandom nature.

  Of course, if a sequence is not completely periodic but is only
  almost periodic with period $k$, than its $Q_k$ measure is still
  large.

  Applying the Discrete Fourier Transform Test for testing the
  sequence $E_N$ defined above with $20$ samples of length $100000$, 
  we find that it fails this test (strongly).

  So that both approaches point out the periodic nature of $E_N$, thus
  \textit{it fails both tests}.
\end{example}
Now let us study a more complicated example.
\begin{example}
  Consider two especially important special sequences: the
  Rudin-Shapiro sequence 
  (defined by 
  \begin{math}
    (-1)^{\sum_{i} \varepsilon_i(n)\varepsilon_{i+1}(n)}
  \end{math}
  where $\varepsilon_i$ denotes the $i$-th binary digit of $n$
  )
  and the Thue-Morse sequence
  (defined by 
  \begin{math}
    (-1)^{\sum_{i} \varepsilon_i(n)}
  \end{math}
  ).
  It is known \cite{MS1998} that in both cases if we take a sequence of
  length $N$ then its correlation measure of order $2$ is very large, it is
  $\gg N$ which is again much larger than the optimal $\asymp N^{1/2}$
  so that in terms of the measures of pseudo randomness described
  above they are both far from being of pseudorandom nature.

  But what about the Discrete Fourier Transform Test, do these
  sequences also fail this test?
  First consider the Rudin-Shapiro sequence:
\begin{center}
\begin{figure}[H]
{\small
\VerbatimInput{Rudin_Shapiro.txt}
}
\caption{Results of 13 NIST tests for the Rudin-Shapiro sequence}\label{fig:RS}
\end{figure}
\end{center}
(Here and in some further tables an asterisk indicates after the 
column ``$P$-value'' that the $P$-values belonging to the sequence studied 
and the test named in the last column are non-uniform, while after 
column ``proportion'' it denotes that many or all of these sequences 
fail the test in question.) Now consider the Thue-Morse sequence:
\begin{center}
\begin{figure}[H]
{\small
\VerbatimInput{Thue_Morse.txt}
}
\caption{Results of 13 NIST tests for the Thue-Morse sequence}\label{fig:TM}
\end{figure}
\end{center}
So that both sequences fail this test. The Fourier transform of the 
Rudin-Shapiro polynomial, whose maximum modulus is very close to its 
$L^2$ norm, and this is a very unusual property (see \cite{Sh}). Similarly, 
the Fourier transform of the Thue-Morse sequence has a very small 
$L^1$ norm (see \cite{FM}). This explains why they fail the Discrete Fourier 
Transform test which detects that they are far from the 
DFT of a random sequence. 
\end{example}
Our examples show that both approaches can be used effectively for
detecting some sort of periodicity.

\section{Numerical calculation}\label{sec:numerical}

In the previous sections we showed that those binary sequences $E_N\in\{-1,+1\}^N$ whose pseudorandom measures $W(E_N)$ and $C_k(E_N)$ are small, also have  strong pseudorandom properties in terms of the NIST tests \emph{a priori}, i.e. they provably  pass or ``almost pass'' most of the NIST tests. In this section we test sequences constructed by principles described in Section \ref{sec:construction} \emph{a posteriori}. The examples show that these sequences typically pass the NIST tests, even if we can only prove a slightly weaker pseudorandomness.

We use ``Statistical Test Suite for random and pseudorandom number generators for cryptographic application'' (\texttt{sts-1.4}) from the National Institute of Standards and Technology (NIST). We chose the following parameters for the test suite.
\begin{figure}[!ht]
\begin{center}
 \begin{tabular}{ll}
\texttt{BlockFrequency} & $M=128$\\
\texttt{OverlappingTemplate} & $m=9$\\
\texttt{ApproximateEntropy} & $m=10$ \\
\texttt{LinearComplexity} & $M=500$\\
\end{tabular}
\caption{Parameter choices for NIST test suite}
\end{center}
\end{figure}

In order to save space we omit the results of the non-overlapping template matching (\texttt{NonOver\-lappingTemplate}), the random excursions (\texttt{RandomExcursions}) and the random excursions variant tests (\texttt{RandomExcursionsVariant}).

The results of the tests are given in Figures \ref{fig:legendre}, \ref{fig:inverse} and \ref{fig:ec}. Columns \texttt{C1} up to \texttt{C10} correspond to the frequency specific to the test. Then the \texttt{P-VALUE} is the result of the application of a $\chi^2$-test, and \texttt{PROPORTION} is the proportion of sequences that pass the test.

\subsection{Sequences generated using the Legendre symbol}

We constructed 20 sequences with length $p=10^5+3$ by \eqref{constr:legendre} with the first 20 squarefree polynomial of degree 31 with respect to the lexicographic ordering: $f_i(x)=x^{31}+i$ ($i=1,\dots, 20$). Since 2 is a primitive root modulo $p=10^5+3$, Theorem \ref{thm:GMS2004} implies that all the sequences $E_p(i)$ generated with $f_i(x)$ ($i=1,\dots, 20$) have small well-distribution and correlation measures. 

\begin{figure}[H]
{\small
\VerbatimInput{finalAnalysisReport_legendre_pol.txt}
}
\caption{Results of 14 NIST tests for sequences generated by using the Legendre symbol}\label{fig:legendre}
\end{figure}

\subsection{Sequences generated using the multiplicative inverse}

We took $p=2\cdot 10^5+3$ and considered the polynomials
\[
f_i(x)=x\cdot  \prod_{j=15(i-1)+1}^{15i} (x^2+j^2),\quad i=1,\dots, 20.
\]
Since $2\cdot 10^5+3\equiv 3 \pmod 4$, $-1$ is quadratic non-residue, and the least non-negative remainders of $-j^2$ ($j=1,\dots,300$) modulo $p$, $r_{p}(-j^2)$,  are also quadratic non-residues. Then these polynomials satisfy the conditions of Theorem \ref{thm:inverse-Liu}, thus they have small well-distribution and correlation measures. However the sequences generated by \eqref{constr:inverse} with the polynomials  $f_i(x)$ have a non-trivial symmetry. Namely, $f_i(-x)=-f_i(x)$, so $e_{n}=-e_{p-n}$ for all $1\leq n< p$ if the sequence $E_{p}=(e_0,\dots, e_{p-1})\in\{-1,+1\}^p$ is generated such a way. (For tools to detect such symmetries see \cite{gySymmetry}). To avoid this phenomenon we just considered the first half of the sequences: $E(i)_{(p+1)/2}=\{e_0(i),\dots, e_{(p-1)/2}(i)\}$, where $e_n(i)$ ($0\leq n<p/2$) is defined by the rule \eqref{constr:inverse}. In this way we obtained 20 sequences of length 100002.
\begin{figure}[H]
{\small
\VerbatimInput{finalAnalysisReport_multiplicative_inverse.txt}
}
\caption{Results of 14 NIST tests for sequences generated by using the multiplicative inverse}\label{fig:inverse}
\end{figure}

\subsection{Sequences generated using elliptic curves}

In order to generate sequences with elliptic curves we chose pseudorandom curves and points following the NIST recommendation (FIPS 186-3). We took the prime $p=10^5+3$ and a pseudorandom elliptic curve of the form
\[
 y^2=x^3-3x+b 
\]
over $\F_p$ with the additional restriction, that the number $T$ of the $\F_p$-rational points is prime and 2 is a primitive root modulo $T$. 
Then we selected a pseudorandom point $P$ on the curve. 

Our parameters were the following: 
\[
 \E:  y^2=x^3-3x+74439 \quad \text{over } \F_{10^5+3}.
\]
Its cardinality $T$ is $100523$. The point was $P=(85611,76395)$. We took the functions $f_i(x,y)=x^{31}+x+y+i$ ($i=0,\dots, 19$). Since 2 is a primitive root modulo $T$, and the functions $f_i(x,y)$ ($i=0,\dots, 19$) are not perfect squares, Theorem \ref{thm:ec} implies, that the well-distribution and correlation measures of sequences generated by the polynomials $f_i(x,y)$ ($i=0,\dots, 19$) are small.

\begin{figure}[H]
\centering
{\small
\VerbatimInput{finalAnalysisReport_ec_pol.txt}
}
\caption{Results of 14 NIST tests for sequences generated by using the elliptic curves}\label{fig:ec}
\end{figure}

Summarizing: we have considered altogether 60 binary sequences which have
been proved to possess good pseudorandom properties in terms of 
the pseudorandom measures described in Section \ref{sec:measures}, 
and we tested them by 14 NIST tests. 834 times out of 840 the sequences 
passed the test so that the NIST tests confirmed the good pseudorandom 
quality of the sequence. 

\section{Conclusion Remarks}

The tables in Section 8 show that the sequences ``good'' in terms of the 
measures defined in Section 2 are usually also ``good'' in terms of the 
NIST tests. Does this mean that we may eliminate the 
NIST tests, replace them by estimating the pseudorandom measures 
described above? Certainly not: both methods have advantages and 
disadvantages. The greatest advantage of using the measures of pseudorandomness
described above is that at least for certain special sequences they
enable us to provide ``a priori'', ``theoretical'' testing without any
further computations. An other advantage of this method is that in 
many constructions (like the ones described in Section 3) we can 
give a good upper bound (simultaneously by just a single computation)
for the correlation measure of order $k$ of a sequence of length $N$ 
for every $k$ with 
$k<N^c$ (say, with $c=1/4$), and since it is known \cite{CMS2002}
that the correlation measures of order $k$, resp. $\ell$ are independent if 
$k\nmid\ell$ and $\ell\nmid k$, thus by estimating the correlation measures 
whose order is less than $N^c$, we test $N^{c'}$ (with $c'<c$)
independent pseudorandom properties of the sequence, while in the 
NIST tests only 15 properties are tested. On the other hand, 
the disadvantage of this
approach is that in most cases it is very difficult to estimate
these measures, e.g. there are no algorithms for estimating 
correlation measure of high order. On the other hand, NIST provides good
and fast algorithms for performing these tests, while its
disadvantage is that it can not be used for ``a priori'',
``theoretical'' testing.

In Section 2 we described only the most important measures of 
pseudorandomness and in Section 3 we presented only three 
constructions for sequences having good pseudorandom properties 
in terms of these measures. There are also other measures of 
pseudorandomness and many further constructions; a survey of these 
measures and constructions; a survey of these measures and constructions 
is presented in \cite{Gy2013}. In this paper we have been focusing on 
studying pseudorandom properties of 
\textit{single binary sequences}. However, 
as we mentioned in Section 1, if our goal is to test the quality 
of a pseudorandom generator, then it is not enough to restrict ourselves to 
testing single sequences; one also has to continue the work by using the 
tools of cryptanalysis for testing the \textit{family  of the sequences} 
generated by the given algorithm. Tools for helping this work also have been 
introduced (in the spirit of the measures described in Section 2):
family complexity \cite{AKMS}, cross-correlation measure \cite{GMS}, 
distance minimum and avalanche effect \cite{To}, etc., and 
in each of these cases constructions have been presented for families 
possessing good pseudorandom properties in terms of these measures. A
survey of this type of papers is presented in \cite{SA}.

\end{document}